\newtheorem{theorem}{Theorem}
\theoremstyle{plain}
\numberwithin{equation}{section}
\begin{document}
\title[Multiple solutions to the Bahri-Coron problem]{Multiple solutions to the Bahri-Coron problem in some domains with nontrivial topology}
\author{M\'{o}nica Clapp}
\address{Instituto de Matem\'{a}ticas, Universidad Nacional Aut\'{o}noma de M\'{e}xico,
Circuito Exterior C.U., 04510 M\'{e}xico D.F., Mexico}
\email{mclapp@matem.unam.mx}
\author{Jorge Faya}
\address{Instituto de Matem\'{a}ticas, Universidad Nacional Aut\'{o}noma de M\'{e}xico,
Circuito Exterior C.U., 04510 M\'{e}xico D.F., Mexico}
\email{jorgefaya@gmail.com}
\thanks{\emph{Mathematics Subject Classification (2010): }35J66, 35J20.}
\thanks{Research partially suported by CONACYT grant 129847 and PAPIIT-DGAPA-UNAM
grant IN106612 (M\'{e}xico)}
\date{February 2012}
\keywords{Nonlinear elliptic boundary value problem, critical exponent, multiple solutions.}

\begin{abstract}
We show that in every dimension $N\geq3$ there are many bounded domains
$\Omega\subset\mathbb{R}^{N},$ having only finite symmetries, in which the
Bahri-Coron problem
\[
-\Delta u=\left\vert u\right\vert ^{4/(N-2)}u\text{ \ in }\Omega,\text{
\ \ }u=0\text{ \ on }\partial\Omega,
\]
has a prescribed number of solutions, one of them being positive and the rest
sign changing.

\end{abstract}
\maketitle

\section{Introduction}

We consider the problem%
\[
(\wp_{\Omega})\text{\qquad}\left\{
\begin{array}
[c]{ll}%
-\Delta u=|u|^{2^{\ast}-2}u & \text{in }\Omega,\\
\text{ \ \ \ }u=0 & \text{on }\partial\Omega,
\end{array}
\right.
\]
where $\Omega$ is a bounded smooth domain in $\mathbb{R}^{N}$ and $2^{\ast
}:=\frac{2N}{N-2}$ is the critical Sobolev exponent.

Equations of this type arise in fundamental questions in differential geometry
like the Yamabe problem or the scalar curvature problem.

Problem $(\wp_{\Omega})$ has a rich geometric structure: it is invariant under
the group of M\"{o}bius transformations. This fact causes a lack of
compactness of the associated variational functional, which prevents the
straightforward application of standard variational methods.

It is well known that the existence of a solution depends on the domain.
Pohozhaev's identity \cite{po}, together with the unique continuation of
solutions \cite{k}, implies that $(\wp_{\Omega})$ does not have a nontrivial
solution if $\Omega$ is strictly starshaped. On the other hand, if the domain
is an annulus,%
\[
A=A_{a,b}:=\{x\in\mathbb{R}^{N}:0<a<\left\vert x\right\vert <b\},
\]
Kazdan and Warner \cite{kw} showed that $(\wp_{A})$ has infinitely many radial
solutions. Moreover, if $\Omega$ is invariant under the action of a group $G$
of linear isometries of $\mathbb{R}^{N}$ and every $G$-orbit of $\Omega$ is
infinite, compactness is restored (cf. Theorem \ref{teoPS}\ below), and
standard variational methods provide infinitely many $G$-invariant solutions
to problem $(\wp_{\Omega})$.

The first nontrivial existence result is due to Coron \cite{co}. He showed
that, if $0\notin\Omega$ and $\Omega$ contains an annulus $A_{a,b}$ with $b/a$
large enough, then problem $(\wp_{\Omega})$ has at least one positive
solution. A few years later a remarkable result was obtained by Bahri and
Coron \cite{bc} who showed that problem $(\wp_{\Omega})$ has at least one
positive solution in every domain $\Omega$ having nontrivial reduced homology
with $\mathbb{Z}/2$-coefficients.

Concerning multiplicity, as we already mentioned, $(\wp_{\Omega})$ has
infinitely many solutions if $\Omega$ is $G$-invariant and every $G$-orbit of
$\Omega$ is infinite. On the other hand, several multiplicity results have
been established for domains which are obtained by deleting a thin enough
neighborhood of a certain subset from a given domain, see e.g.
\cite{cgp,cmp,cw1,cw4,mp,mp2,lyy,pa,r}. In particular, for the type of domains
considered by Coron, Ge, Musso and Pistoia \cite{gmp} recently obtained a
strong multiplicity result: they basically showed that, if $0\notin\Omega$ and
$\Omega$ contains an annulus $A_{\varepsilon,b},$ then the number of solutions
to $(\wp_{\Omega})$ becomes arbitrarily large as $\varepsilon\rightarrow0$.
This result requires no symmetries on the domain $\Omega.$ Its proof is based
on the Lyapunov-Schmidt reduction method.

But for domains which are neither highly symmetric nor small perturbations of
a given domain, multiplicity remains largely open. A first result in this
direction was recently established in \cite{cp}. Here we shall extend the main
result in \cite{cp} in a way which provides many new examples of domains
$\Omega$ in which problem $(\wp_{\Omega})$ has a prescribed number of solutions.

We need some notation. Let $O(N)$ be the group of linear isometries of
$\mathbb{R}^{N}$. If $G$ is a closed subgroup of $O(N)$, we denote by
\[
Gx:=\{gx:g\in G\}
\]
the $G$-orbit of $x\in\mathbb{R}^{N}$ and by $\#Gx$ its cardinality. A domain
$\Omega\subset\mathbb{R}^{N}$ is called $G$-invariant if $Gx\subset\Omega$ for
all $x\in\Omega,$ and a function $u:\Omega\rightarrow\mathbb{R}$ is called
$G$-invariant if $u$ is constant on every $Gx.$

Fix a closed subgroup $\Gamma$ of $O(N)$ and a nonempty $\Gamma$-invariant
bounded smooth domain $D$ in $\mathbb{R}^{N}$ such that $\#\Gamma x=\infty
\ $for all $x\in D.$ We prove the following result.

\begin{theorem}
\label{thmmain}There exists an increasing sequence $(\ell_{m})$ of positive
real numbers, depending only on $\Gamma$ and $D$, with the following property:
If $\Omega$ contains $D$ and if it is invariant under the action of a closed
subgroup $G$ of $\Gamma$ such that%
\[
\min_{x\in\Omega}\#Gx>\ell_{m},
\]
then problem $(\wp_{\Omega})$ has at least $m$ pairs of $G$-invariant
solutions $\pm u_{1},\ldots,\pm u_{m}$ such that $u_{1}$ is positive,
$u_{2},\ldots,u_{m}$ change sign, and
\[
\int_{\Omega}\left\vert \nabla u_{k}\right\vert ^{2}\leq\ell_{k}%
S^{N/2}\text{\qquad for every }k=1,\ldots,m,
\]
where $S$ is the best Sobolev constant for the embedding $D^{1,2}%
(\mathbb{R}^{N})\hookrightarrow L^{2^{\ast}}(\mathbb{R}^{N})$.
\end{theorem}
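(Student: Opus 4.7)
The plan is to produce the $m$ pairs by a Krasnoselski-genus minimax on the $G$-invariant Nehari manifold
\[
\mathcal{N}_\Omega^G:=\{u\in H_0^1(\Omega)^G\setminus\{0\}:\|\nabla u\|_2^2=|u|_{2^*}^{2^*}\},
\]
on which the energy $J_\Omega(u):=\tfrac{1}{2}\|\nabla u\|_2^2-\tfrac{1}{2^*}|u|_{2^*}^{2^*}$ reduces to $\tfrac{1}{N}\|\nabla u\|_2^2$ and is even. By Theorem~\ref{teoPS}, $J_\Omega|_{\mathcal{N}_\Omega^G}$ satisfies the Palais--Smale condition at every level $c<\tfrac{1}{N}(\min_{x\in\Omega}\#Gx)\,S^{N/2}$, so everything reduces to locating $m$ minimax critical levels strictly below this compactness threshold.

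To build the right upper bounds I use $D$ as a topological template. Since every $\Gamma$-orbit in $D$ is infinite, Theorem~\ref{teoPS} gives full $(\mathrm{PS})$-compactness for $J_D$ on $\mathcal{N}_D^\Gamma$, and classical genus theory for even functionals yields an increasing, unbounded sequence
\[
c_k:=\inf\bigl\{\max_{u\in A}J_D(u):A\subset\mathcal{N}_D^\Gamma\text{ closed, symmetric, }\gamma(A)\ge k\bigr\}
\]
of critical values, where $\gamma$ denotes the Krasnoselski genus. Setting $\ell_k:=N c_k/S^{N/2}$ defines the sequence in the statement, depending only on $\Gamma$ and $D$. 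Because $G\le\Gamma$, every $\Gamma$-invariant function on $D$ is $G$-invariant, and extension by zero is a linear isometry $\iota:H_0^1(D)^\Gamma\hookrightarrow H_0^1(\Omega)^G$ that preserves $J$, commutes with $u\mapsto -u$, and sends $\mathcal{N}_D^\Gamma$ into $\mathcal{N}_\Omega^G$. Pushing the genus-$k$ minimax classes through $\iota$ shows that the analogous $\Omega$-minimax values $c_k^\Omega$ obey $c_k^\Omega\le c_k=\ell_k S^{N/2}/N$; under the hypothesis $\min_{x\in\Omega}\#Gx>\ell_m$ this places every $c_k^\Omega$ with $1\le k\le m$ strictly below the $(\mathrm{PS})$ threshold. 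The usual genus/deformation argument on $\mathcal{N}_\Omega^G$ then produces $m$ pairs $\pm u_1,\ldots,\pm u_m$ of $G$-invariant critical points with $\|\nabla u_k\|_2^2=Nc_k^\Omega\le\ell_k S^{N/2}$.

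The lowest level $c_1^\Omega$ is the ground-state energy, achieved (after replacing $u_1$ by $|u_1|$ and invoking the maximum principle) by a positive function. The main obstacle is to force $u_2,\ldots,u_m$ to be sign-changing rather than additional positive critical points: the bare genus argument delivers the right number of critical pairs but does not distinguish sign. I would address this by refining the minimax classes to consist of closed symmetric subsets of $\mathcal{N}_\Omega^G$ at positive distance from the cones $\pm\mathcal{P}^G$ of positive and negative $G$-invariant functions. Since each cone is contractible, $(\mathcal{P}^G\cup(-\mathcal{P}^G))\cap\mathcal{N}_\Omega^G$ has genus at most one, so removing a small symmetric neighbourhood of it drops the genus by at most one, and classes of genus $\ge k\ge 2$ remain admissible with genus $\ge k-1\ge 1$. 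Coupled with a pseudo-gradient flow on $\mathcal{N}_\Omega^G$ that respects the decomposition $u=u^+-u^-$ and keeps $\pm\mathcal{P}^G$ positively invariant, this forces the critical points at $c_2^\Omega,\ldots,c_m^\Omega$ to lie outside $\pm\mathcal{P}^G$ and hence to change sign. Making this sign-preserving deformation rigorous on the Nehari manifold, and verifying that the transfer from $D$ still supplies admissible sign-changing test sets (which holds because the $\Gamma$-invariant critical points of $D$ at levels $c_k$, $k\ge 2$, are themselves sign-changing by the same reasoning), is the delicate technical point.
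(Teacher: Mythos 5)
Your overall architecture (define thresholds $\ell_k$ from the template domain $D$, transfer test objects into $\Omega$ by extension by zero, and run a symmetric minimax at levels strictly below the compactness threshold of Theorem \ref{teoPS}) is sound, and your bookkeeping of levels against the Palais--Smale threshold is correct. But your route differs from the paper's at the decisive step, and that is exactly where the gap sits. The paper does not use Krasnoselskii genus on the Nehari manifold at all: it defines $c_k$ as the infimum of $\sum_{i=1}^{k}J(\omega_{D_i})$ over $k$-tuples of pairwise disjoint $\Gamma$-invariant subdomains $D_i\subset D$, where $\omega_{D_i}$ is a least-energy $\Gamma$-invariant solution on $D_i$; the extensions by zero then span a $k$-dimensional subspace $W_k\subset H_0^1(\Omega)^G$ on which, because the supports are disjoint and each $\omega_{D_i}$ is a mountain-pass point, $\sup_{W_k}J\leq\sum_{i}J(\omega_{D_i})<\left(\min_{x\in\Omega}\#Gx\right)c_\infty$. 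The sign-changing multiplicity then comes in one stroke from the quoted Theorem \ref{thmcp} (i.e.\ \cite[Theorem 3.7]{cp}), which yields $\dim W_k-1$ pairs of sign-changing critical points below $\sup_{W_k}J$.

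The genuine gap in your proposal is precisely the part you yourself flag as ``the delicate technical point'': forcing $u_2,\ldots,u_m$ to change sign. As written, your genus minimax only yields $m$ pairs of critical points and nothing prevents several of them from being positive. The cone-avoidance program you sketch is essentially a re-derivation of \cite[Theorem 3.7]{cp}, and it needs two nontrivial ingredients you do not supply: (i) an odd descending flow on $H_0^1(\Omega)^G$ (or on $\mathcal{N}_\Omega^G$) for which the cones $\pm\mathcal{P}^G$ are positively invariant --- typically the flow of $u\mapsto u-(-\Delta)^{-1}(|u|^{2^{\ast}-2}u)$, whose cone-invariance and admissibility as a pseudo-gradient must be verified at the critical exponent; and (ii) a uniform positive lower bound on the distance from sign-changing critical points to $\pm\mathcal{P}^G$ (available via Sobolev applied to $u^{\pm}$), so that critical points produced by classes avoiding a neighbourhood of the cones genuinely change sign. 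Your closing claim that the transferred test sets are admissible ``because the $\Gamma$-invariant critical points of $D$ at levels $c_k$ are sign-changing'' is both unproven and beside the point: admissibility follows from genus subadditivity, not from any property of the critical points of $J_D$. Two smaller issues: genus minimax values are in general only non-decreasing, so your $(\ell_k)$ need not be increasing as the statement requires (the paper's construction gives $\ell_{k-1}+1\leq\ell_k$ for free); and the positivity of $u_1$ requires the $L^{\infty}$-regularity of critical-exponent solutions before the maximum principle can be invoked.
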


The particular case where $\Gamma=O(N)$ and $D=A_{a,b}$ was established in
\cite{cp}. This situation is, however, quite restrictive, particularly in odd
dimensions. For example, if $N=3,$ then $\min_{x\in A_{a,b}}\#Gx\leq12$ for
every subgroup $G\neq SO(3),O(3),$ cf. \cite{clm}. As we shall see, the number
$\ell_{1}$ goes to infinity as $b/a\rightarrow1.$ Therefore, the main result
in \cite{cp}\ will provide solutions in subdomains of $\mathbb{R}^{3}$ only if
$b/a$ is sufficiently large, which is the case already handled by Coron
\cite{co} and by Ge, Musso and Pistoia \cite{gmp}.

Theorem \ref{thmmain}, on the other hand, provides examples in every dimension
of domains $\Omega$, having only finite symmetries, in which problem
$(\wp_{\Omega})$ has a prescribed number of solutions. Specific examples may
be obtained as follows: let $D_{0}$ be a bounded smooth domain in
$\mathbb{R}^{N-1},$ $N\geq3,$ with $D_{0}\subset\{(x,y)\in\mathbb{R}%
\times\mathbb{R}^{N-2}:x\geq\varepsilon\}$ for some $\varepsilon>0.$ Set
\[
D:=\{(z,x^{\prime})\in\mathbb{C}\times\mathbb{R}^{N-2}\equiv\mathbb{R}%
^{N}:(\left\vert z\right\vert ,y)\in D_{0}\}.
\]
Then $D$ is invariant under the action of the group $\Gamma:=\mathbb{S}^{1}$
of unit complex numbers, acting by $e^{i\theta}(z,x^{\prime}):=(e^{i\theta
}z,x^{\prime}).$ Note that this action is free on $(\mathbb{C}\smallsetminus
\{0\}\mathbb{)}\times\mathbb{R}^{N-2}$, so if $G_{n}:=\{e^{2\pi ik/n}%
:k=0,...,n-1\}$ is the cyclic subgroup of order $n,$ then $\#G_{n}x=n$ for
every $x\in(\mathbb{C}\smallsetminus\{0\}\mathbb{)}\times\mathbb{R}^{N-2}.$
Therefore, for every $n>\ell_{m}$ and every $G_{n} $-invariant bounded smooth
domain $\Omega$ in $\mathbb{R}^{N}$ with
\[
D\subset\Omega\subset(\mathbb{C}\smallsetminus\{0\}\mathbb{)}\times
\mathbb{R}^{N-2},
\]
Theorem \ref{thmmain} yields at least $m$ pairs of solutions to problem
$(\wp_{\Omega}).$

This result supports our belief that multiplicity should hold in
noncontractible domains, as those considered in \cite{bc}. But the proof of
such a general statement is still way out of reach.

The proof of Theorem \ref{thmmain} is variational and it is given in the
following section.

\section{Proof of the main theorem.}

Let $G$ a closed subgroup of $O(N)$. If $\Omega$ is $G$-invariant, the
principle of symmetric criticality \cite{p}\ asserts that the $G$-invariant
solutions of problem $(\wp_{\Omega})$\ are the critical points of the
restriction of the functional
\[
J(u):=\frac{1}{2}\int_{\Omega}\left\vert \nabla u\right\vert ^{2}-\frac
{1}{2^{\ast}}\int_{\Omega}\left\vert u\right\vert ^{2^{\ast}}%
\]
to the space of $G$-invariant functions
\[
H_{0}^{1}(\Omega)^{G}:=\{u\in H_{0}^{1}(\Omega):u(gx)=u(x)\text{ \ for all
}g\in G,\text{ }x\in\Omega\}.
\]

We shall say that $J$ satisfies the Palais-Smale condition $(PS)_{c}^{G}$ in
$H_{0}^{1}(\Omega)$ if every sequence $(u_{n})$ such that
\[
u_{n}\in H_{0}^{1}(\Omega)^{G},\qquad J(u_{n})\rightarrow c,\qquad\nabla
J(u_{n})\rightarrow0,
\]
contains a convergent subsequence.

Let $c_{\infty}:=\frac{1}{N}S^{N/2}$ be the energy of the positive solution
(unique up to translation and dilation) to the problem%
\[
-\Delta u=|u|^{2^{\ast}-2}u,\text{\qquad}u\in D^{1,2}(\mathbb{R}^{N}).
\]
We shall make use the following results.

\begin{theorem}
\label{teoPS}$J$ satisfies condition $(PS)_{c}^{G}$ in $H_{0}^{1}(\Omega)$ for
every%
\[
c<\min_{x\in\overline{\Omega}}(\#Gx)c_{\infty}.
\]
In particular, if $\#Gx=\infty$ for all $x\in\overline{\Omega},$ then $J$
satisfies condition $(PS)_{c}^{G}$ in $H_{0}^{1}(\Omega)$ for every
$c\in\mathbb{R}$.
\end{theorem}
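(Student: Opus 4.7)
The plan is to adapt Struwe's global compactness decomposition to the $G$-invariant setting. Let $(u_n) \subset H_0^1(\Omega)^G$ satisfy $J(u_n) \to c$ and $\nabla J(u_n) \to 0$. The standard identity
$$\bigl(\tfrac{1}{2}-\tfrac{1}{2^{\ast}}\bigr)\|u_n\|_{H_0^1}^2 = J(u_n) - \tfrac{1}{2^{\ast}}\langle \nabla J(u_n), u_n\rangle$$
yields boundedness, so after extracting a subsequence $u_n \rightharpoonup u$ weakly in $H_0^1(\Omega)$, and the weak limit $u \in H_0^1(\Omega)^G$ is a weak solution of $(\wp_\Omega)$; in particular $J(u)\ge 0$.

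Next I would invoke Struwe's global compactness theorem applied to the residual $u_n-u$: there exist an integer $k\ge 0$, sequences of scales $\varepsilon_n^j\to 0^{+}$ and of concentration points $y_n^j\in\Omega$ (boundary concentration being excluded, since the critical Dirichlet problem on a half-space has no nontrivial finite-energy solution), and nontrivial profiles $\omega_j\in D^{1,2}(\mathbb{R}^N)$ solving $-\Delta\omega=|\omega|^{2^{\ast}-2}\omega$, such that, along a subsequence,
$$\Bigl\|u_n - u - \sum_{j=1}^{k}(\varepsilon_n^j)^{-(N-2)/2}\,\omega_j\!\bigl(\tfrac{\,\cdot\,-y_n^j}{\varepsilon_n^j}\bigr)\Bigr\|_{H_0^1}\longrightarrow 0,$$
and $J(u_n)\to J(u)+\sum_{j=1}^{k}J_\infty(\omega_j)$, with each $J_\infty(\omega_j)\ge c_\infty$ by the Sobolev inequality.

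The heart of the argument, and what I expect to be the main obstacle, is exploiting $G$-invariance to force the bubbles to appear in complete $G$-orbits. Applying the decomposition to $u_n\circ g^{-1}=u_n$ for any $g\in G$ produces the same multiset of bubbles but with data $(gy_n^j,\varepsilon_n^j)$, so $G$ must permute the indexing set. Two bubbles at the same scale are asymptotically orthogonal in $H_0^1$ unless $|gy_n^j-y_n^j|/\varepsilon_n^j$ stays bounded; whenever $gy_\infty^j\ne y_\infty^j$ this ratio blows up, so the orbit of the $j$-th bubble contributes at least $\#Gy_\infty^j\ge \min_{x\in\overline{\Omega}}\#Gx$ distinct bubbles, each carrying energy $\ge c_\infty$. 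Consequently, if $k\ge 1$,
$$c = \lim J(u_n) \;\ge\; J(u) + \min_{x\in\overline{\Omega}}(\#Gx)\,c_\infty \;\ge\; \min_{x\in\overline{\Omega}}(\#Gx)\,c_\infty,$$
contradicting the hypothesis. Hence $k=0$, $u_n\to u$ strongly in $H_0^1(\Omega)$, which is $(PS)_c^G$; the case $\#Gx=\infty$ on $\overline{\Omega}$ follows at once. The delicate point to nail down is the orbit-counting argument: one must verify that distinct elements of $Gy_\infty^j$ really correspond to distinct bubbles in the decomposition, which requires the asymptotic orthogonality of the rescaled profiles under the group action.
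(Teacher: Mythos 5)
The paper does not prove this theorem itself: its ``proof'' is the citation \cite[Corollary 2]{c}, and that cited result is precisely the $G$-equivariant version of Struwe's global compactness decomposition that you outline, with the same orbit-counting of concentration points forcing at least $\min_{x\in\overline{\Omega}}\#Gx$ bubbles of energy $\geq c_{\infty}$ each. Your sketch therefore follows essentially the same route as the paper's (cited) argument, and the delicate point you correctly flag --- that the bubbles occur in complete $G$-orbits of asymptotically orthogonal profiles --- is exactly what the cited reference makes rigorous.
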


\begin{proof}
See \cite[Corollary 2]{c}.
\end{proof}

\begin{theorem}
\label{thmcp}Let $W$ be a finite dimensional subspace of $H_{0}^{1}%
(\Omega)^{G}.$ If $J$ satisfies condition $(PS)_{c}^{G}$ in $H_{0}^{1}%
(\Omega)$\ for all $c\leq\sup_{W}J$, then $J$ has at least $\dim(W)-1$ pairs
of sign changing critical points $u\in H_{0}^{1}(\Omega)^{G}$ such that
$J(u)\leq\sup_{W}J.$
\end{theorem}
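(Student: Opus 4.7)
The plan is to run a $\mathbb{Z}/2$-equivariant Krasnoselskii--Rabinowitz minimax scheme on the $G$-invariant Nehari manifold, with the descent flow modified to avoid the cones of nonnegative and nonpositive functions. This modification is what forces the critical points produced to change sign. Concretely, set $P^\pm := \{u \in H_0^1(\Omega)^G : \pm u \geq 0\}$ and, for sufficiently small $\delta > 0$,
\[
\mathcal{O}^\pm := \{u : \mathrm{dist}(u, P^\pm) < \delta\|u\|\}, \quad \mathcal{O} := \mathcal{O}^+ \cup \mathcal{O}^-, \quad \mathcal{K} := H_0^1(\Omega)^G \setminus \mathcal{O},
\]
so that $\mathcal{O}^+ \cap \mathcal{O}^- = \emptyset$, each $\mathcal{O}^\pm$ is a convex scaling-invariant open cone, and $u \mapsto -u$ swaps them. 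Using the positivity of $(-\Delta)^{-1}$ applied to $|u|^{2^*-2}u$, I would show that the gradient of $J$ can be perturbed into a $G$-equivariant, odd, locally Lipschitz pseudo-gradient whose negative flow leaves each $\overline{\mathcal{O}^\pm}$ (and hence $\mathcal{K}$) positively invariant; the strong maximum principle then implies that every nontrivial critical point of $J$ in $\overline{\mathcal{O}^+}$ lies in $P^+$, and symmetrically for $\mathcal{O}^-$, so every critical point in $\mathcal{K}$ must change sign.

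Next, restrict attention to the Nehari manifold $\mathcal{N} := \{u \in H_0^1(\Omega)^G \setminus \{0\} : \|u\|^2 = \|u\|_{2^*}^{2^*}\}$, on which $J \equiv \tfrac{1}{N}\|\cdot\|^2 \geq \tfrac{1}{N}S^{N/2}$. The odd $G$-equivariant Nehari projection $\pi(u) := t(u)u$ preserves $\mathcal{O}^\pm$ (both being scaling invariant), and with Krasnoselskii genus $\gamma$ set
\[
c_k := \inf_{A \in \mathcal{F}_k} \sup_{u \in A} J(u), \qquad \mathcal{F}_k := \{A \subset \mathcal{N} \cap \mathcal{K} : A \text{ compact, symmetric, } \gamma(A) \geq k\}.
\]
The standard deformation argument, driven by the cone-preserving pseudo-gradient and the hypothesis $(PS)_c^G$ for $c \leq \sup_W J$, then shows that each finite $c_k$ is a critical value of $J|_{\mathcal{N}}$ (hence of $J$, via Lagrange multipliers) realized by a sign-changing function; if $c_k = c_{k+j}$ for some $j \geq 1$, genus subadditivity forces the critical set at this level to have genus $\geq j + 1$ and hence to be infinite.

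Finally, the upper bound $c_k \leq \sup_W J$ for $k \leq d - 1$, where $d := \dim W$, comes from the sphere $\Sigma := \{u \in W : \|u\| = r\}$ for any $r > 0$: $\Sigma$ has $\gamma(\Sigma) = d$; the intersections $\mathcal{O}^\pm \cap \Sigma$ are convex, hence contractible, and are swapped by $u \mapsto -u$, so $\gamma(\Sigma \cap \overline{\mathcal{O}}) \leq 1$; subadditivity of $\gamma$ yields $\gamma(\Sigma \setminus \mathcal{O}) \geq d - 1$, and the odd continuous projection $\pi$ sends this to a compact symmetric set $A_0 := \pi(\Sigma \setminus \mathcal{O}) \subset \mathcal{N} \cap \mathcal{K}$ of genus $\geq d - 1$. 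The linearity of $W$ gives
\[
\sup_{v \in A_0} J(v) = \sup_{u \in \Sigma \setminus \mathcal{O}} \max_{t > 0} J(tu) \leq \sup_W J,
\]
so $c_k \leq \sup_W J$ for all $k \leq d - 1$. Together with the previous step this produces $\dim(W) - 1$ pairs of sign-changing $G$-invariant critical points with energy at most $\sup_W J$. The main obstacle will be the cone-preserving descent: the naive pseudo-gradient of $J$ does not in general respect $\mathcal{O}^\pm$, and the needed perturbation relies on the positivity of $(-\Delta)^{-1}$ together with the sign-compatibility of the superlinear nonlinearity $|u|^{2^*-2}u$ near the boundary of the cones.
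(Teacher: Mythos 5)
Your proposal is correct in outline and is essentially the intended argument: the paper does not prove Theorem \ref{thmcp} itself but quotes it from \cite{cp} (Theorem 3.7), whose proof is precisely this kind of equivariant genus minimax over symmetric sets avoiding neighborhoods of the cones $P^{\pm}$, with an odd, cone-preserving descent flow built from the order-preserving operator $(-\Delta)^{-1}$, in the spirit of \cite{cw1}. Two of your side claims are false as stated but harmless to the scheme: with relative-distance neighborhoods one does \emph{not} have $\mathcal{O}^{+}\cap\mathcal{O}^{-}=\emptyset$ globally (rapidly oscillating $u$ with $\|u\|_{2^{\ast}}\ll\|u\|$ lie in both), only on sets where $\|u\|_{2^{\ast}}\geq c\|u\|$ --- which is all you use, since this holds on the sphere $\Sigma\subset W$ by equivalence of norms in finite dimensions and on $\mathcal{N}\cap\{J\leq\sup_{W}J\}$ by the Nehari constraint together with the energy bound --- and $\mathcal{O}^{\pm}\cap\Sigma$ are not convex (no nontrivial subset of a sphere is), so the estimate $\gamma\bigl(\Sigma\cap\overline{\mathcal{O}}\bigr)\leq1$ should instead be derived from the fact that the two closed pieces are disjoint there and interchanged by $u\mapsto-u$.
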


\begin{proof}
See \cite[Theorem 3.7]{cp}.
\end{proof}

\bigskip

\noindent\textbf{Proof of Theorem \ref{thmmain}.}\emph{\qquad}Let
$\mathcal{P}_{1}(D)$ be the set of all nonempty $\Gamma$-invariant bounded
smooth domains contained in $D,$ and define%
\[
\mathcal{P}_{k}(D):=\{(D_{1},\mathcal{\ldots},D_{k}):D_{i}\in\mathcal{P}%
_{1}(D)\text{, \ }D_{i}\cap D_{j}=\emptyset\text{ if }i\neq j\}.
\]
Note that $\mathcal{P}_{k}(D)\neq\emptyset$ for every $k\in\mathbb{N}.$ Since
$\#\Gamma x=\infty$ for all $x\in D_{i},$ Theorem \ref{teoPS} asserts that $J$
satisfies condition $(PS)_{c}^{\Gamma}$ in $H_{0}^{1}(D_{i})$ for all
$c\in\mathbb{R}$. Hence, the mountain pass theorem \cite{ar} yields a
nontrivial least energy $\Gamma$-invariant solution $\omega_{D_{i}}\ $to
problem $(\wp_{D_{i}}).$ We define%
\[
c_{k}:=\inf\left\{
{\textstyle\sum\limits_{i=1}^{k}}
J(\omega_{D_{i}}):(D_{1},\mathcal{\ldots},D_{k})\in\mathcal{P}_{k}(D)\right\}
\text{\qquad and\qquad}\ell_{k}:=c_{\infty}^{-1}c_{k}.
\]
Note that $c_{1}=J(\omega_{D}).$ Since $J(\omega_{D_{i}})\geq c_{\infty},$ we
have that%
\[
c_{k-1}+c_{\infty}\leq%
{\textstyle\sum\limits_{i=1}^{k}}
J(\omega_{D_{i}})
\]
for every $(D_{1},\mathcal{\ldots},D_{k})\in\mathcal{P}_{k}(D)$, $k\geq2.$ It
follows that
\[
c_{k-1}+c_{\infty}\leq c_{k}\text{\qquad and\qquad}\ell_{k-1}+1\leq\ell_{k}.
\]

Let $m\in\mathbb{N}$ and let $\Omega$ be a bounded smooth domain containing
$D,$ which is invariant under the action of a closed subgroup $G$ of $\Gamma$
such that
\begin{equation}
\min_{x\in\Omega}\#Gx>\ell_{m}. \label{hypothesis}%
\end{equation}
Given $\varepsilon\in(0,c_{\infty})$ with $c_{m}+\varepsilon<\left(
\min_{x\in\Omega}\#Gx\right)  c_{\infty},$ we choose $(D_{1},\mathcal{\ldots
},D_{m})\in\mathcal{P}_{m}(D)$ such that%
\[
c_{m}\leq%
{\textstyle\sum\limits_{i=1}^{m}}
J(\omega_{D_{i}})<c_{m}+\varepsilon.
\]
Observe that $\omega_{D_{i}}\in H_{0}^{1}(\Omega)^{G}$ and satisfies%
\begin{equation}
J(\omega_{D_{i}})=\max_{t\geq0}J(t\omega_{D_{i}}). \label{mountainpass}%
\end{equation}
For each $k=1,\ldots,m,$ let $W_{k}$ be the subspace of $H_{0}^{1}(\Omega
)^{G}$ generated by $\{\omega_{D_{1}},\ldots,\omega_{D_{k}}\}$ and
$d_{k}:=\sup_{W_{k}}J.$ Since $D_{i}\cap D_{j}=\emptyset$ if $i\neq j,$ the
intersection of the supports of $\omega_{D_{i}}$ and $\omega_{D_{j}}$ has
measure zero. Therefore $\omega_{D_{i}}$ and $\omega_{D_{j}}$ are orthogonal
in $H_{0}^{1}(\Omega)^{G}$ and, consequently, $\dim W_{k}=k.$ Identity
(\ref{mountainpass}) implies that
\[
d_{k}=\sup_{W_{k}}J\leq%
{\textstyle\sum\limits_{i=1}^{k}}
J(\omega_{D_{i}})<\left(  \min_{x\in\Omega}\#Gx\right)  c_{\infty}.
\]
Then, by Theorem \ref{teoPS}, $J$ satisfies $(PS)_{c}^{G}$ in $H_{0}%
^{1}(\Omega)$ for all $c\leq d_{k},$ so the mountain pass theorem \cite{ar}
yields a positive critical point $u_{1}\in H_{0}^{1}(\Omega)^{G}$ of $J $ such
that $J(u_{1})\leq d_{1}.$ Moreover, applying Theorem \ref{thmcp}\ to each
$W_{k}$, we obtain $m-1$ pairs of sign changing critical points $\pm
u_{2},\ldots,\pm u_{m}$ such that%
\[
J(u_{k})\leq d_{k}\text{\qquad for every }k=1,\ldots,m.
\]
Note that%
\[
d_{k}+(m-k)c_{\infty}\leq%
{\textstyle\sum\limits_{i=1}^{m}}
J(\omega_{D_{i}})<c_{m}+\varepsilon
\]
so, since $\varepsilon\in(0,c_{\infty}),$ we conclude that%
\begin{equation}
J(u_{k})<c_{m}\text{\qquad for every }k=1,\ldots,m-1. \label{claim1}%
\end{equation}
Next, we prove that we may choose $u_{m}$ such that
\begin{equation}
J(u_{m})\leq c_{m}. \label{claim2}%
\end{equation}
Let $\varepsilon_{n}\in(0,c_{\infty})$ be such that $\varepsilon
_{n}\rightarrow0,$ and let $u_{m,n}$ denote the $m$-th critical point obtained
by applying the previous argument with $\varepsilon=\varepsilon_{n}$. Then
$J(u_{m,n})<c_{m}+\varepsilon_{n}.$ If $J(u_{m,n_{0}})\leq c_{m} $ for some
$n_{0},$ we are done. If $J(u_{m,n})>c_{m}$ for all $n$, then $J(u_{m,n}%
)\rightarrow c_{m}.$ Since $\nabla J(u_{m,n})=0$ and $J$ satisfies
$(PS)_{c_{m}}^{G},$ there exists a $u_{m}\in H_{0}^{1}(\Omega)^{G}$ such that,
after passing to a subsequence, $u_{m,n}\rightarrow u_{m}.$ Therefore, $u_{m}$
is a critical point of $J$ with $J(u_{m})=c_{m}.$ Note that $u_{m}$ is
positive if $m=1$ and\ it is sign changing if $m\geq2.$ Moreover,
(\ref{claim1}) implies that $u_{m}\neq u_{k}$ for every $k=1,\ldots,m-1.$ This
proves (\ref{claim2}).

Finally, note that if $\Omega$ satisfies (\ref{hypothesis}) then it also
satisfies%
\[
\min_{x\in\Omega}\#Gx>\ell_{k}\text{\qquad for each \ }k=1,\ldots,m.
\]
So, applying the previous argument to each $k,$ we obtain $k$ pairs
of$\ G$-invariant solutions $\pm u_{1}^{k},\ldots,\pm u_{k}^{k}$ to
$(\wp_{\Omega})$ such that $u_{1}^{k}$ is positive, $u_{2}^{k},\ldots
,u_{k}^{k}$ change sign, and%
\[
J(u_{i}^{k})\leq c_{k}\text{\qquad for every \ }i=1,\ldots,k.
\]
Setting $u_{1}:=u_{1}^{1}$ and choosing $u_{k}\in\{u_{2}^{k},\ldots,u_{k}%
^{k}\}$ with $k\geq2$ inductively, such that $u_{k}\neq u_{i}$ for every
$i=1,\ldots,k-1,$\ we obtain $m$ pairs of $G$-invariant solutions $\pm
u_{1},\ldots,\pm u_{m}$ such that $u_{1}$ is positive, $u_{2},\ldots,u_{m}$
change sign, and%
\[
J(u_{k})\leq c_{k}\text{\qquad for every \ }k=1,\ldots,m,
\]
as claimed. \qed\noindent


\begin{thebibliography}{99}                                                                                               %


\bibitem {ar}\textsc{A. Ambrosetti, P.H. Rabinowitz,} Dual variational methods
in critical point theory and applications, \emph{J. Funct. Anal.} \textbf{14}
(1973), 349-381.

\bibitem {bc}\textsc{A. Bahri, J.M. Coron}, On a nonlinear elliptic equation
involving the critical Sobolev exponent: The effect of the topology of the
domain, \emph{Comm. Pure Appl. Math.} \textbf{41} (1988), 253-294.

\bibitem {clm}\textsc{P. Chossat, R. Lauterbach, I. Melbourne,} Steady-state
bifurcation with $O(3)$-symmetry, \emph{Arch. Rational Mech. Anal.} 113
(1990), 313--376.

\bibitem {c}\textsc{M. Clapp,} A global compactness result for elliptic
problems with critical nonlinearity on symmetric domains. Nonlinear equations:
methods, models and applications (Bergamo, 2001), 117--126, \emph{Progr.
Nonlinear Differential Equations Appl.} \textbf{54}, Birkh\"{a}user, Basel, 2003.

\bibitem {cgp}\textsc{M. Clapp, M. Grossi, A. Pistoia,} Multiple solutions to
the Bahri-Coron problem in domains with a hole of positive dimension,
\emph{Complex Var. Elliptic Equ.} Available online 20 Sep 2010, DOI: 10.1080/17476931003628265.

\bibitem {cmp}\textsc{M. Clapp, M. Musso, A. Pistoia,} Multipeak solutions to
the Bahri-Coron problem in domains with a shrinking hole, \emph{J. Funct.
Anal.} \textbf{256} (2009), 275-306.

\bibitem {cp}\textsc{M. Clapp, F. Pacella,} Multiple solutions to the pure
critical exponent problem in domains with a hole of arbitrary size,
\emph{Math. Z.} \textbf{259} (2008), 575-589.

\bibitem {cw1}\textsc{M. Clapp, T. Weth}, Minimal nodal solutions of the pure
critical exponent problem on a symmetric domain, \emph{Calc. Var.} \textbf{21}
(2004), 1-14.

\bibitem {cw4}\textsc{M. Clapp, T. Weth}, Two solutions of the Bahri-Coron
problem in punctured domains via the fixed point transfer, \emph{Commun.
Contemp. Math.} \textbf{10} (2008), 81-101.

\bibitem {co}\textsc{J.M. Coron}, Topologie et cas limite des injections de
Sobolev, \emph{C.R. Acad. Sc. Paris} \textbf{299}, Ser. I (1984), 209-212.

\bibitem {gmp}\textsc{Y. Ge, M. Musso, A. Pistoia,} Sign changing tower of
bubbles for an elliptic problem at the critical exponent in pierced
non-symmetric domains, \emph{Comm. Partial Differential Equations} \textbf{35}
(2010), 1419-1457

\bibitem {kw}\textsc{J. Kazdan, F. Warner,} Remarks on some quasilinear
elliptic equations, \emph{Comm. Pure Appl. Math.} \textbf{38} (1975), 557-569.

\bibitem {k}\textsc{C. Kenig,} Restriction theorems, Carleman estimates,
uniform Sobolev inequalities and unique continuation, in "Harmonic analysis
and partial differential equations" (El Escorial, 1987), pp. 69-90,
\emph{Lecture Notes in Math.} \textbf{1384}, Springer, Berlin, 1989.

\bibitem {lyy}\textsc{G. Li, S. Yan, J. Yang,} An elliptic problem with
critical growth in domains with shrinking holes. \emph{J. Differential
Equations} \textbf{198} (2004), 275--300.

\bibitem {mp}\textsc{M. Musso, A. Pistoia}, Sign changing solutions to a
nonlinear elliptic problem involving the critical Sobolev exponent in pierced
domains, \emph{J. Math. Pures Appl.} \textbf{86} (2006), 510-528.

\bibitem {mp2}\textsc{M. Musso, A. Pistoia}, Sign changing solutions to a
Bahri-Coron problem in pierced domains, \emph{Discrete Contin. Dyn. Syst.}
\textbf{21} (2008), 295-306.

\bibitem {p}\textsc{R. Palais,} The principle of symmetric criticallity,
\emph{Comm. Math. Phys.} \textbf{69} (1979), 19-30.

\bibitem {pa}\textsc{D. Passaseo,} Multiplicity of positive solutions of
nonlinear elliptic equations with critical Sobolev exponent in some
contractible domains, \emph{Manuscripta Math.} \textbf{65} (1989), 147-165.

\bibitem {po}\textsc{S.I. Poho\v{z}aev}, Eigenfunctions of the equation
$\Delta u+\lambda f(u)=0,$ \emph{Soviet Math. Dokl.} \textbf{6} (1965), 1408-1411.

\bibitem {r}\textsc{O. Rey,} Sur un probl\'{e}me variationnel non compact:
l'effect de petits trous dans le domain \emph{C.R. Acad. Sci. Paris}
\textbf{308} (1989), 349-352.
\end{thebibliography}
\end{document}